\newtheorem{theorem}{Theorem}
\newtheorem*{theorem*}{Theorem}
\newtheorem{lemma}[theorem]{Lemma}
\theoremstyle{remark}
\newtheorem*{remark*}{Remark}
\theoremstyle{definition}
\newenvironment{proofof}[1]{\noindent{\bf Proof of #1.}\hspace*{1em}}{\qed\bigskip}
\newcommand{\intav}[1]{\mathchoice {\mathop{\vrule width 6pt height 3 pt depth  -2.5pt
\kern -8pt \intop}\nolimits_{\kern -6pt#1}} {\mathop{\vrule width
5pt height 3  pt depth -2.6pt \kern -6pt \intop}\nolimits_{#1}}
{\mathop{\vrule width 5pt height 3 pt depth -2.6pt \kern -6pt
\intop}\nolimits_{#1}} {\mathop{\vrule width 5pt height 3 pt depth
-2.6pt \kern -6pt \intop}\nolimits_{#1}}}
\newcommand{\intavl}[1]{\mathchoice {\mathop{\vrule width 6pt height 3 pt depth  -2.5pt
\kern -8pt \intop}\limits_{\kern -6pt#1}} {\mathop{\vrule width 5pt
height 3  pt depth -2.6pt \kern -6pt \intop}\nolimits_{#1}}
{\mathop{\vrule width 5pt height 3 pt depth -2.6pt \kern -6pt
\intop}\nolimits_{#1}} {\mathop{\vrule width 5pt height 3 pt depth
-2.6pt \kern -6pt \intop}\nolimits_{#1}}}
\newcommand{\N}{\mathbb{N}}
\newcommand{\Z}{\mathbb{Z}}
\begin{document}

\title[Recurrence of VRRW on $\mathbb Z$ with sublinear weight]{vertex-reinforced random walk on $\mathbb Z$ with sub-square-root weights is recurrent.}

\author[Jun Chen]{Jun Chen}

\address{Division of the Humanities and Social Sciences, California Institute of Technology, Pasadena, CA 91125.}
\email{chenjun851009@gmail.com}

\author[Gady Kozma]{Gady Kozma}

\address{Faculty of Mathematics and Computer Science, Weizmann Institute of Science,  POB 26, 76100, Rehovot, Israel.}
\email{gady.kozma@weizmann.ac.il}

\subjclass[2010]{Primary: 60K35.}

\date{January 6, 2014}
\keywords{Vertex-reinforced random walk, recurrent vs transient, sublinear reinforcement, martingale}

\begin{abstract}
We prove that vertex-reinforced random walk on $\mathbb Z$
with weight of order $k^\alpha$, for $\alpha\in [0, 1/2)$, is
  recurrent. This confirms a conjecture of Volkov for $\alpha<1/2$. The conjecture for $\alpha\in [1/2, 1)$ remains open.
\end{abstract}

\maketitle

\section{Introduction}

Linearly vertex-reinforced random walk (VRRW for short), introduced by Pemantle \cite{pemantle1992vertex}, was studied
on $\mathbb Z$ by Pemantle and Volkov \cite{pemantle1999vertex}. A striking phenomenon was proved for this model
\cites{pemantle1999vertex, tarres2004vertex}: the random walk will eventually visit just
5 sites on $\mathbb Z$ almost surely.

In contrast, Volkov later in \cite{volkov2006phase} studied non-linear vertex reinforced random walk on $\mathbb Z$ with some
weight function $w:\{0,1,2,\dotsc\}\to(0,\infty)$. This process, denoted by
$(X_n, n\ge 0)$ is defined as follows. Fix $X_0=0$. Then for all $n\ge 0$,
\begin{equation}  \label{def_RW}
\mathbb P(X_{n+1}=X_n \pm 1 |X_1,\dotsc,X_n)=\frac{w(Z_n(X_n\pm 1))}{w(Z_n(X_n- 1))+w(Z_n(X_n+1))},
\end{equation}
where 
$Z_n(y)= \# \{m\le n: X_m=y\}$
is the local time in $y\in \mathbb Z$ at time $n$. For $w_k =
k^\alpha(c+o(1))$, $\alpha\ge 0$, Volkov proved the existence of phase
transition for this model. That is, there is a large time $T_0$ such that after $T_0$, the walk visits $2, 5$
or $\infty$ sites when $\alpha>1$, $\alpha=1$ and $\alpha<1$ respectively. In the case of $\alpha<1$, though it was
proved that the random walk will visit infinitely many sites, it is not clear whether it will visit every site
of $\Z$ infinitely many times with probability 1. Namely, the question whether the random walk is recurrent\footnote{The definition 
of recurrence we use here is that the random walk visits every vertex of $\mathbb Z$ infinitely many times almost surely and the definition
of transience is that the random walk visits every vertex of $\mathbb Z$ finitely many times almost surely.} was left open.

Recently, Schapira was able to move one step further towards a positive answer to this question, and in \cite{schapira20110}
proved a 0-1 law for VRRW on $\mathbb Z$ with weight of order $k^\alpha$, for $\alpha\in [0, 1/2)$.
In this paper, we show that in this regime the walk is in fact recurrent.

\begin{theorem*}  \label{main_result}
Vertex reinforced random walk on $\mathbb Z$ with weight $w(k)\approx k^\alpha$, $\alpha\in [0, 1/2)$, and montone increasing is recurrent.
\end{theorem*}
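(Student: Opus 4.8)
The starting point is Schapira's 0-1 law: the walk is either recurrent a.s. or transient a.s., so it suffices to rule out transience. Suppose for contradiction that the walk is transient; then it visits every vertex only finitely often, and in particular (being on $\mathbb Z$ and visiting infinitely many sites by Volkov's result) it must have a well-defined direction of escape — say it goes to $+\infty$ with positive probability — after a last visit to some site. The heart of the matter is then to analyze the walk near its ``frontier'': at the moment the walk reaches a fresh site $N$ for the first time, the local times $Z(N-1), Z(N-2),\dots$ at the sites just behind it form a profile, and the transition probabilities \eqref{def_RW} are governed by the ratios $w(Z(y-1))/(w(Z(y-1))+w(Z(y+1)))$ along this profile. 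Because $w(k)\approx k^\alpha$ with $\alpha<1/2$, reinforcement is weak, and the key quantitative input will be that the walk behaves ``almost'' like a simple random walk on the relevant space-time scale, so that excursions back from the frontier reach far enough to flatten any incipient trapping profile.

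The technical engine I would build is a martingale (or supermartingale) argument controlling the local-time profile behind the frontier. Concretely, fix the configuration at the first hitting time of $N$ and run the walk until it either returns to $N-K$ (for a suitable $K=K(N)$, e.g. a slowly growing function or a constant fraction of $N$) or reaches $N+1$; one shows the walk returns deep into the already-visited region with probability bounded below. This is where the constraint $\alpha<1/2$ enters decisively: the local time accumulated at a site during the $O(N)$ or so visits before the frontier advances is of order $N$ (or a power of $N$), so $w$ of that local time is of order $N^\alpha$, and the \emph{relative} change of $w$ across one step, $w(k+1)/w(k)-1 \approx \alpha/k$, summed against the number of visits, stays controlled precisely because $2\alpha<1$ — various sums of the form $\sum k^{\alpha-1}$ against square-root-sized excursion counts converge or grow slower than the competing terms. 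I would package this as: the expected local time that piles up at the current frontier before it advances is too small, by a factor that is summable in $N$, to ever produce the kind of sharp local-time well that a trap would require. Borel–Cantelli then forces infinitely many ``deep backtracks,'' contradicting transience (which would require the walk to leave every site permanently, hence to never backtrack deep infinitely often).

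Carrying this out, the main obstacle is the delicate coupled evolution of the \emph{entire} profile $(Z(N-1),Z(N-2),\dots)$ rather than a single coordinate: when the walk is near the frontier the transition probabilities depend on two neighboring local times simultaneously, and excursions of the walk modify many coordinates at once, so the natural ``distance from a simple random walk'' must be tracked as a functional of the whole profile. I expect the proof to hinge on finding the right such functional — plausibly something like a weighted sum $\sum_y \phi(Z(y))$ with $\phi$ close to linear (so that it is nearly a martingale for SRW) but with a slight concavity or convexity correction tuned to $\alpha$ — and showing it is a supermartingale up to controllable error, with the error terms all governed by $\sum k^{2\alpha-1}<\infty$-type estimates. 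A secondary obstacle is bookkeeping at the two frontiers (left and right) simultaneously, and ensuring that ruling out escape to $+\infty$ also rules out escape to $-\infty$ and oscillatory transience; this should follow by symmetry together with the 0-1 law, but needs care. The threshold $\alpha=1/2$ is genuinely visible in the method: at $\alpha=1/2$ the critical sums diverge logarithmically and the martingale control breaks down, which is consistent with the conjecture remaining open there.
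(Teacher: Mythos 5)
Your high-level frame (invoke Schapira's 0--1 law, assume transience, derive a contradiction from behaviour at the frontier) matches the paper's, but the argument has two genuine gaps. First, the step that closes the contradiction fails as stated: ``infinitely many deep backtracks'' from the frontier does not contradict transience. If the walk at the fresh site $N$ returns to $N-K(N)$ with $K(N)$ slowly growing, or even $K(N)=N/2$, the revisited sites still tend to $+\infty$, so every \emph{fixed} vertex may still be visited only finitely often and $X_n\to+\infty$ remains perfectly consistent with your conclusion. To contradict transience you must force returns to a fixed vertex, and your excursion-by-excursion lower bound provides no mechanism for chaining backtracks all the way down to the origin. Second, the technical engine is not actually built: you say you ``expect the proof to hinge on finding the right such functional'' of the form $\sum_y \phi(Z(y))$, which is precisely the missing idea, and it is not the one that works.

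The paper's Lyapunov function is evaluated at the walker's position rather than summed over the profile: $F_i(v)=\sum_{j=0}^{v-1} 1/\bigl(w(Z_i(j))\,w(Z_i(j+1))\bigr)$, the effective resistance from $0$ to $v$ in the network with conductances $w(Z_i(j))w(Z_i(j+1))$. This is harmonic in $v$ for the current transition kernel and decreasing in $i$ because local times only increase, so $F_i(X_i)$, stopped at the return time to $0$, is a positive supermartingale --- for \emph{any} increasing weight, with no use of $\alpha<1/2$ at this stage. Under transience it is eventually a positive supermartingale and hence converges a.s.; but each arrival at a fresh site $N$ produces the increment $1/(w(Z_{T_N}(N-1))w(1))$, and Schapira's uniform tail bound $\sup_N\mathbb P(Z_{T_N}(N-1)>k)\le Ce^{-ck^c}$ (this, together with the 0--1 law, is where $\alpha<1/2$ actually enters --- not through $\sum k^{2\alpha-1}$-type error sums in a profile martingale) yields $\liminf_N Z_{T_N}(N-1)<\infty$, so the increments are bounded below along a subsequence, contradicting convergence. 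In short, the contradiction comes from a convergent supermartingale making infinitely many uniformly large jumps, not from deep backtracking; your plan is missing both this function and a valid closing step.
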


The notation $w(k)\approx k^\alpha$ means that the ratio between the
two quantities is bounded between two constants independent of $k$
(except for $w(0)$ on which we make no requirements).

The proof of the theorem consists of a martingale argument, which is a
modification of a similar martingale argument used in
\cite{davis1990reinforced} for edge-reinforced random walk 
on $\mathbb Z$.
Another ingredient of the proof 
is the fact that for small $\alpha$, the random walk will not
visit the nearby sites too many times before moving to a new site (see Lemma \ref{range_increasing}). This fact was basicly
proved by Schapira via a kind of domino principle. This is the part of
the proof that only works for $0\le \alpha<1/2$.

Let us remark that Arvind Singh \cite{Arvind2014} arrived at a similar result
simultaneously, using a martingale argument similar in spirit but
different in some technical details.

\section{Proof}

Recall the definition of $Z_i(j)$. For all $i\in \N$, we define a
sequence of random variables $F_i: \mathbb Z\setminus\{0\} \to \mathbb R^+$
\begin{equation}  \label{def_harmonic}
F_i(v)=\begin{cases}  
\sum_{j=0}^{v-1} \dfrac{1}{w(Z_i (j))\cdot w(Z_i (j+1))} & \text{ if } v>0;  \\
\sum_{j=v}^{-1} \dfrac{1}{w(Z_i (j))\cdot w(Z_i (j+1))} & \text{ if }  v<0. \\
\end{cases}
\end{equation}
Note that $F_i(\cdot)$ depends on the history of the random walk up to
time $i$ and is $\mathcal F_i-$measurable where $\mathcal F_i$ is the
$\sigma$-field spanned by $X_1,\dotsc,X_i$. Then we
have the following lemma.

\begin{lemma}  \label{supermartingale}
Let $X_0=0$. Let $T=\min\{i>0:X_i=0\}$ i.e.\ the first time the
process returns to the origin. Then $\{F_{\min(T,i)}(X_{\min(T,i)}):i=1,2,\dotsc\}$ is a supermartingale.
\end{lemma}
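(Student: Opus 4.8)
The plan is to verify the supermartingale inequality directly by computing, at each time $i < T$, the conditional expectation of $F_{i+1}(X_{i+1})$ given $\mathcal F_i$, and showing it does not exceed $F_i(X_i)$. Write $x = X_i$, and assume $x > 0$ (the case $x < 0$ is symmetric, and since we stop at $T$ we never need $x = 0$). The walk moves to $x+1$ with probability $p_+ = w(Z_i(x+1))/\big(w(Z_i(x-1)) + w(Z_i(x+1))\big)$ and to $x-1$ with probability $p_- = 1 - p_+$. There are two sources of change in $F$ from time $i$ to time $i+1$: first, the argument changes from $x$ to $X_{i+1} = x \pm 1$, which adds or removes exactly one term of the sum $\sum_j 1/(w(Z(j))w(Z(j+1)))$; second, the local time at $x$ increases by one, i.e. $Z_{i+1}(x) = Z_i(x) + 1$, which changes the two terms of the sum involving $w(Z(x))$, namely the $j = x-1$ and $j = x$ terms.

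The key step is to isolate the two terms of $F$ that straddle the site $x$. Let $a = 1/\big(w(Z_i(x-1))\,w(Z_i(x))\big)$ be the $j = x-1$ term and $b = 1/\big(w(Z_i(x))\,w(Z_i(x+1))\big)$ the $j = x$ term, both as they appear in $F_i$; note $a/b = w(Z_i(x+1))/w(Z_i(x-1))$, so in fact $p_+ = a/(a+b)$ and $p_- = b/(a+b)$. All other terms of $F_i(x)$ are unaffected by the transition (their local-time arguments don't involve $x$ and neither endpoint is added or removed), so it suffices to compare the $\mathcal F_i$-conditional expectation of the "straddling" contribution to $F_{i+1}(X_{i+1})$ against $a + b$. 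After the step, $w(Z(x))$ is replaced by $w(Z_i(x)+1)$; write $a' = 1/\big(w(Z_i(x-1))\,w(Z_i(x)+1)\big)$ and $b' = 1/\big(w(Z_i(x)+1)\,w(Z_i(x+1))\big)$ for the updated versions of the two terms. If the walk goes to $x+1$, the $j = x-1$ and $j = x$ terms both survive in $F_{i+1}(x+1) = \sum_{j=0}^{x} \cdots$, contributing $a' + b'$; if it goes to $x-1$, only the $j = x-1$ term survives in $F_{i+1}(x-1) = \sum_{j=0}^{x-2}\cdots$, contributing $a'$. Hence the conditional expectation of the straddling contribution is
\begin{equation*}
p_+ (a' + b') + p_- a' = a' + p_+ b' = \frac{a}{a+b}\Big(\frac{1}{w(Z_i(x-1))} + \frac{1}{w(Z_i(x+1))}\Big)\frac{1}{w(Z_i(x)+1)}.
\end{equation*}

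What remains is the inequality $a' + p_+ b' \le a + b$, which after clearing denominators and using $a + b = \frac{1}{w(Z_i(x))}\big(\frac{1}{w(Z_i(x-1))} + \frac{1}{w(Z_i(x+1))}\big)$ reduces to showing
\begin{equation*}
\frac{1}{w(Z_i(x)+1)}\Big(\frac{1}{w(Z_i(x-1))} + \frac{p_+ \,w(Z_i(x-1))}{w(Z_i(x+1))}\cdot\frac{1}{w(Z_i(x-1))/ \, \cdot}\Big) \le \frac{1}{w(Z_i(x))}\Big(\cdots\Big),
\end{equation*}
which I expect to collapse to the clean statement $a' + p_+ b' \le a + b$ being equivalent to $w(Z_i(x)) \le w(Z_i(x)+1)$ together with a convexity-free elementary bound, i.e. it uses only that $w$ is nondecreasing (or even just that $w(m+1) \ge w(m)$, which holds since $w(k) \approx k^\alpha$ with $\alpha \ge 0$ up to constants — this is the one place where one must be slightly careful, since $w$ is only comparable to $k^\alpha$, not monotone; one may need to absorb the constant, or the inequality may hold with the true $w$ regardless because the dangerous term $b'$ is discounted by $p_+ < 1$). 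The main obstacle is precisely this last point: making the final elementary inequality go through using only $w(k) \approx k^\alpha$ rather than genuine monotonicity of $w$. I would handle it by checking whether the slack from the factor $p_+ < 1$ in front of $b'$ (equivalently, the fact that we lose the $j=x$ term with probability $p_-$) is enough to beat any bounded multiplicative fluctuation of $w$; if not, the lemma as stated presumably intends $w$ nondecreasing, and I would invoke that.
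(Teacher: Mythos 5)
Your overall strategy---computing $\mathbb E(F_{i+1}(X_{i+1})\mid\mathcal F_i)$ in one shot and observing that $p_+=a/(a+b)$---is workable and close in spirit to the paper's argument, but the bookkeeping contains concrete errors that break the logical chain. First, with the paper's definition $Z_n(y)=\#\{m\le n:X_m=y\}$, the local time that increments between times $i$ and $i+1$ is that of the site moved \emph{to}, namely $X_{i+1}$, not of $x=X_i$ (the visit to $x$ at time $i$ is already counted in $Z_i(x)$); so the updated terms are not the $a'$ and $b'$ you define. Second, and independently, your identification of the ``straddling contribution'' is off by one: $F_i(x)=\sum_{j=0}^{x-1}$ contains the $j=x-1$ term $a$ but \emph{not} the $j=x$ term $b$, and $F_{i+1}(x-1)=\sum_{j=0}^{x-2}$ does not contain the $j=x-1$ term at all. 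Consequently the inequality you reduce everything to, $a'+p_+b'\le a+b$, is not the inequality that is needed for the supermartingale property; with correct accounting (in the paper's convention) one needs $p_+\tilde b\le p_- a$ where $\tilde b=1/(w(Z_i(x))\,w(Z_i(x+1)+1))$, which follows from the exact identity $p_+b=p_-a$ together with $\tilde b\le b$. Your final displayed ``reduction'' is also garbled, and the argument is explicitly left unfinished.

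The paper sidesteps all of this by splitting the step in two: (i) \emph{harmonicity}, $\mathbb E(F_i(X_{i+1})\mid\mathcal F_i)=F_i(X_i)$ exactly, computed with the weights frozen at time $i$ (this is your $p_+=a/(a+b)$ observation, cleanly packaged); and (ii) \emph{monotonicity}, $F_{i+1}(v)\le F_i(v)$ for every fixed $v$, because local times only increase. Your worry about $w$ being merely comparable to $k^\alpha$ rather than genuinely nondecreasing is a fair one: step (ii), and any version of your computation, really does use $w(m+1)\ge w(m)$, and the paper implicitly assumes this (see the remark following the lemma, which states the result for increasing weight sequences); the slack coming from $p_+<1$ is not uniform and would not rescue a non-monotone $w$. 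So you correctly identified the one substantive hypothesis being used, but you neither resolved that point nor carried out the computation correctly.
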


\begin{proof} 
We think about moving from $F_i(X_i)$ to $F_{i+1}(X_{i+1})$ as
being composed of two steps: moving $X$ and updating the weights. 
We will prove that $F_i$ satisfies the following two properties:
\begin{enumerate}
\item \label{enu:harm}\emph{harmonicity:} for all $i\in \N$, with respect to the random walk's transition probability at time $i$, $F_i(v)$ 
is harmonic (in $v$) on $\mathbb Z\setminus\{0\}$. In other words, the
first step is a martingale.
\item \label{enu:monot}\emph{monotonicity:} for any fixed $v\in \mathbb Z\setminus\{0\}$, $F_i(v)$ is monotone decreasing in $i$.
\end{enumerate}
Let us prove (\ref{enu:harm}). We condition on $\mathcal F_i$, and denote $v=X_i$ for
brevity, and assume $v>0$ (the other case is similar). We get
\begin{align*}
\mathbb{E}(F_i(X_{i+1})&\,|\,\mathcal{F}_i) =\mathbb P(X_{i+1}=v+1) F_i(v+1) +\mathbb P(X_{i+1}=v-1) F_i(v-1)   \\
&= \frac{w(Z_i(v+1))}{w(Z_i(v-1))+w(Z_i(v+1))} \sum_{j=0}^{v} \frac{1}{w(Z_i (j))\cdot w(Z_i (j+1))}   \\
&\qquad +\frac{w(Z_i(v-1))}{w(Z_i(v-1))+w(Z_i(v+1))} \sum_{j=0}^{v-2} \frac{1}{w(Z_i (j))\cdot w(Z_i (j+1))}   \\
&= \sum_{j=0}^{v-2} \frac{1}{w(Z_i (j))\cdot w(Z_i (j+1))}
 +  \frac{w(Z_i(v+1))}{w(Z_i(v-1))+w(Z_i(v+1))}\;\cdot\\
&\qquad\cdot\; \left( \frac{1}{w(Z_i(v-1))\cdot w(Z_i (v))}+ \frac{1}{w(Z_i (v))\cdot w(Z_i (v+1))} \right) \\
&= \sum_{j=0}^{v-1} \frac{1}{w(Z_i (j))\cdot w(Z_i (j+1))} = F_i(v).
\end{align*}
Hence, we proved (\ref{enu:harm}).

(\ref{enu:monot}) follows from the fact that for fixed $j$, $Z_i(j)$, the random walk's local time is monotone increasing in time $i$.

Now by harmonicity and monotonicity of $F_i(v)$, one has
\[
\mathbb E (F_{i+1}(X_{i+1})\, |\, \mathcal F_i)\le 
\mathbb E(F_i(X_{i+1})\,|\,\mathcal F_i)
= F_i(X_i),
\]
so $F_i(X_{i})$ is a supermartingale.
\end{proof}

\begin{remark*}
Lemma \ref{supermartingale} holds more generally for any vertex
reinforced random walk on $\mathbb Z$ with increasing weight
sequence. In fact it holds for any self-interacting process where the
vertex weights are increasing, and $\mathbb Z$ may be replaced with
any tree (also remarked in \cite{amir2008rwce}).
\end{remark*}

To prove the theorem we need a second lemma. Let $T_n$ denote the hitting time of a vertex $n \in \mathbb Z$. Then,
\begin{lemma} \label{range_increasing}
Almost surely, $I:=\liminf_{n\to \infty} Z_{T_n}(n-1)< \infty$.
\end{lemma}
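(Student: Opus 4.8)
The plan is to argue by contradiction: suppose that with positive probability $I = \liminf_{n\to\infty} Z_{T_n}(n-1) = \infty$, meaning that along the trajectory, the walk visits the left-neighbour of each freshly discovered site an increasing (to infinity) number of times before first reaching that new site. On this event the walk is automatically transient to $+\infty$ (it leaves every finite interval), so for all large $n$ the walk never returns to $n-1$ after time $T_n$ and hence $Z_{T_n}(n-1) = Z_\infty(n-1)$, the final local time at $n-1$. So $I=\infty$ would force $Z_\infty(n-1)\to\infty$, i.e.\ the final local times of consecutive sites tend to infinity. The heart of Schapira's domino argument is that this is impossible when $\alpha<1/2$: roughly, if the walk has accumulated a large local time $k$ at site $n-1$ before ever reaching $n$, then each time it sits at $n-1$ it jumps to $n$ with probability $\asymp w(Z(n))/(w(Z(n-2))+w(Z(n))) \ge c\, w(0)/(w(0)+w(k)) \asymp k^{-\alpha}$, so after $O(k^\alpha)$ visits to $n-1$ it reaches $n$; and symmetrically, in order to have built up local time $k$ at $n-1$ while $n-2$ kept up (local time also growing), the walk had to oscillate back across the edge $\{n-2,n-1\}$, which by the same estimate happens only $O((\text{local time at }n-2)^\alpha)$ times per excursion. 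Iterating this down the line one gets that the local time at site $n-1$ is controlled by a bounded power of the local time a few sites to the left, and since $\alpha<1/2$ the exponents contract, so the sequence $Z_\infty(n-1)$ cannot grow to infinity — it must have a bounded subsequence.

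Concretely, the steps I would carry out are: \textbf{(1)} Reduce to showing that a.s.\ on the event $\{X_n\to+\infty\}$ we have $\liminf_n Z_\infty(n-1) < \infty$ (and the symmetric statement for $-\infty$), using that $\{I=\infty\}$ implies transience. \textbf{(2)} Fix $n$ and let $k = Z_{T_n}(n-1)$; condition on $\mathcal F_{T_n}$ and on the (finite) profile of local times to the left of $n$ at time $T_n$. Writing the walk restricted to $\{\dots,n-2,n-1,n\}$ as a reinforced walk, estimate the number of visits to $n-1$ before $T_n$ in terms of the number of crossings of the edge $\{n-2,n-1\}$ and the geometric-type waiting time ($\asymp k^\alpha$) for the eventual jump to $n$. \textbf{(3)} Recursively bound $Z_{T_{n-1}}(n-2)$ in terms of $Z_{T_{n-2}}(n-3)$, etc.; this is the actual ``domino'' recursion, and it yields that $Z_{T_n}(n-1)$ is dominated by $C\cdot Z_{T_{n_0}}(n_0-1)^{\beta}$ for some fixed starting index $n_0$, with $\beta<1$ because $\alpha<1/2$ (each step of the recursion multiplies the exponent by something like $\alpha/(1-\alpha)<1$, or contributes additively in a way that stays summable). \textbf{(4)} Conclude: a bounded dominating quantity forces $\liminf_n Z_{T_n}(n-1)<\infty$ a.s., contradicting $I=\infty$ unless that event has probability zero.

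The main obstacle is Step (3), the domino recursion itself: making precise, with the right probabilistic coupling, the claim that ``to build up $k$ visits at $n-1$ the walk must have crossed the edge on its left about $k^\alpha$-many times, each crossing occurring because the left side was itself being built up,'' while keeping the error terms (the $o(1)$ and the constants hidden in $w(k)\approx k^\alpha$, and the randomness of how the excursions are distributed) under control uniformly in $n$. This is exactly the place where the threshold $\alpha=1/2$ enters — the recursion contracts precisely when the relevant exponent is $<1$, which corresponds to $\alpha<1/2$ — so the argument must be arranged so that this contraction is visible, e.g.\ via a Borel--Cantelli estimate showing $\sum_n \mathbb P(Z_{T_n}(n-1) \ge C n) < \infty$ for a suitable slowly-growing bound. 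I would lean on Schapira's \cite{schapira20110} estimates for the technical core of (3) and focus the new work on packaging them to yield the clean $\liminf<\infty$ statement needed for the martingale argument that follows.
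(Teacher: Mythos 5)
Your instinct to lean on Schapira's domino estimates is right --- that is exactly what the paper does --- but two of your logical steps have genuine gaps. First, Step (1) asserts that $\{I=\infty\}$ implies transience, so that $Z_{T_n}(n-1)=Z_\infty(n-1)$ for large $n$. That implication is unjustified: $Z_{T_n}(n-1)\to\infty$ only says the walk accumulates many visits to $n-1$ \emph{before first hitting} $n$; it does not prevent the walk from returning to the origin after each $T_n$, so it does not force transience. (It is also unnecessary: no reduction to the transient case is needed for this lemma.) Second, and more seriously, the packaging in Steps (3)--(4) does not deliver the statement of the lemma. A Borel--Cantelli bound of the form $\sum_n\mathbb P(Z_{T_n}(n-1)\ge Cn)<\infty$ only shows that almost surely $Z_{T_n}(n-1)<Cn$ eventually, which is perfectly compatible with $\liminf_n Z_{T_n}(n-1)=\infty$ (think of $Z_{T_n}(n-1)\sim\sqrt n$); and the deterministic contraction $Z_{T_n}(n-1)\le C\,Z_{T_{n_0}}(n_0-1)^{\beta}$ you hope for in Step (3) is far stronger than what the domino argument actually yields --- it produces tail bounds with exceptional events at every stage, not an almost-sure uniform bound.

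The paper's route is much shorter and avoids both problems. Since $\{\liminf_n Z_{T_n}(n-1)>k\}=\bigcup_{N}\bigcap_{n\ge N}\{Z_{T_n}(n-1)>k\}$, one has $\mathbb P\left(\liminf_n Z_{T_n}(n-1)>k\right)\le\sup_N\mathbb P\left(Z_{T_N}(N-1)>k\right)$, because the probability of an intersection is at most the probability of any single event in it. Schapira's formula (4.3) in \cite{schapira20110} supplies precisely the \emph{uniform-in-$N$, fixed-level} tail bound $\sup_N\mathbb P\left(Z_{T_N}(N-1)>k\right)\le Ce^{-ck^c}$, and letting $k\to\infty$ finishes the proof. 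So the missing ingredient in your write-up is this uniform tail bound at a fixed level $k$ (rather than at a level growing with $n$) together with the elementary intersection estimate --- no contradiction argument, no transience, and no re-derivation of the domino recursion is needed.
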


\begin{proof}
The claim is equivalent to showing
\begin{equation}  \label{Borel_c}
\lim_{k\to\infty}\mathbb P \left( \liminf_{n\to \infty} Z_{T_n}(n-1)>k \right) =0.
\end{equation}
Note that for any fixed $k$
\begin{align*}
\mathbb P \left( \liminf_{n\to \infty} Z_{T_n}(n-1)>k \right) &=  \mathbb P \left( \cup_{N\ge 0}\cap_{n\ge N} \{Z_{T_n}(n-1)>k\} \right)   \\
                                                     &=  \sup_{N\ge 0} \mathbb P \left( \cap_{n\ge N}\{ Z_{T_n}(n-1)>k\} \right)  \\
                                                     &\le  \sup_{N\ge 0} \mathbb P \left(  Z_{T_N}(N-1)>k \right).
\end{align*}
We now apply formula (4.3) in \cite{schapira20110}, which claims that
\begin{equation}  \label{schapira_result}
\sup_{N\ge 0} \mathbb P \left(  Z_{T_N}(N-1)>k \right)\le Ce^{-ck^c}  ,
\end{equation}
where $c$ and $C$ are some positive constants (possibly depending on
the weight $w$).
Hence, (\ref{Borel_c}) follows from (\ref{schapira_result}).  This concludes the proof of the lemma.
\end{proof}

By the same argument as the proof of Lemma \ref{range_increasing}, one
can prove the same behaviour in the negative direction i.e.\ 
$\liminf_{n\to -\infty} Z_{T_n}(n+1)< \infty$.

Finally, we also use the 0-1 law proved by Schapira, which is stated as follows.
\begin{lemma}{\cite{schapira20110}*{Theorem 1.1}} \label{Shapira0-1law}
Vertex-reinforced random walk on $\mathbb Z$ with weight $w(k)\approx k^\alpha$, $k\ge 1$ for some
$\alpha\in [0, 1/2)$, is either recurrent or transient.
\end{lemma}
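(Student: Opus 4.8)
The plan is to prove the dichotomy in two stages: first a sample-path statement identifying recurrence with a clean geometric event, and then a genuine $0$--$1$ law for that event. Let $S\subseteq\mathbb Z$ be the (random) set of vertices visited infinitely often, and let $M_n=\max_{k\le n}X_k$, $m_n=\min_{k\le n}X_k$ be the running maximum and minimum. I would first observe that $S$ is always an interval: if $v<u<w$ with $v,w\in S$ but $u\notin S$, then after the last visit to $u$ the nearest-neighbour path stays on one side of $u$ and cannot visit both $v$ and $w$ again, a contradiction. Since the range is almost surely infinite \cite{volkov2006phase}, $S$ cannot be a finite nonempty interval, for otherwise the walk would eventually be trapped in that finite interval. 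Hence almost surely $S$ is empty, a half-line, or all of $\mathbb Z$. One checks that $S=\mathbb Z$ is equivalent to $\{M_n\to\infty\}\cap\{m_n\to-\infty\}$ (if both the maximum and the minimum are unbounded there are infinitely many alternations of new maxima and minima, so every edge is crossed infinitely often, whence every vertex is visited infinitely often), and thus equivalent to the walk reaching every integer. The content therefore reduces to: (a) excluding half-lines, so that recurrence $\iff\{\text{range}=\mathbb Z\}$ and transience $\iff S=\emptyset$; and (b) proving $\mathbb P(\text{range}=\mathbb Z)\in\{0,1\}$.

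The exclusion of half-lines is the first, $\alpha<1/2$-specific, obstacle. On the event $S=[a,\infty)$ the walk visits $a$ infinitely often while crossing from $a$ to $a-1$ only finitely often, so from some visit onward it always steps right at $a$. The probability of stepping left at the $k$-th visit to $a$ is $w(Z_k(a-1))/(w(Z_k(a-1))+w(Z_k(a+1)))\asymp Z_k(a+1)^{-\alpha}$, with the local time at $a-1$ eventually frozen. Whether the walk is forced back across $a$ infinitely often is thus governed by the divergence of $\sum_k Z_k(a+1)^{-\alpha}$, which in turn hinges on how fast the local time at $a+1$ accumulates between returns to $a$. Exactly here the estimate behind Lemma \ref{range_increasing}, valid only for $\alpha<1/2$, enters: because the walk does not linger near its frontier, $Z_k(a+1)$ grows only linearly in $k$, the series diverges, and the walk must recross $a$ infinitely often, contradicting $S=[a,\infty)$. (For larger $\alpha$ the frontier excursions are long, $Z_k(a+1)$ grows too fast, and the argument breaks down, consistent with the conjecture remaining open for $\alpha\in[1/2,1)$.)

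For the genuine $0$--$1$ law I would first exploit the reflection symmetry $x\mapsto -x$: writing $p=\mathbb P(M_n\to\infty)=\mathbb P(m_n\to-\infty)$ and using that the range is a.s.\ infinite, inclusion–exclusion gives $\mathbb P(\text{range}=\mathbb Z)=2p-1$, so it suffices to show $p\in\{1/2,1\}$, equivalently $\mathbb P(\text{transient})\in\{0,1\}$. To obtain triviality I would run Lévy's $0$--$1$ law along the hitting times $T_n$ of successive new maxima, giving $\mathbb P(\text{range}=\mathbb Z\mid\mathcal F_{T_n})\to\mathbf 1_{\{\text{range}=\mathbb Z\}}$, and combine it with an approximate-regeneration structure at the frontier: when the walk first reaches $n$, the environment strictly to the right is fresh and, by the no-lingering bound \eqref{schapira_result}, the local time at $n-1$ is $O(1)$ outside an exponentially small event. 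The conditional law of the future frontier dynamics is therefore, up to a summable error, the same at every new maximum, which lets one treat the excursions beyond successive new maxima as asymptotically independent and conclude that the limiting indicator is a.s.\ constant.

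I expect this last point to be the main obstacle and the technical heart of the proof. Genuine regeneration fails because the left environment is never truly forgotten, so the walk is not Markovian and $\{\text{range}=\mathbb Z\}$ is not a standard tail event of the driving randomness. The work is to \emph{quantify} the weak dependence between the "return from the frontier'' mechanism and the far-away environment, using the exponential tail in \eqref{schapira_result}, and to verify it is summable enough that a conditional Borel--Cantelli / bounded-martingale argument forces $\mathbf 1_{\{\text{range}=\mathbb Z\}}$ to be deterministic, i.e.\ $\mathbb P(\text{range}=\mathbb Z)\in\{0,1\}$. Both this step and the half-line exclusion rely on the same structural input — that for $\alpha<1/2$ the walk advances its frontier without dwelling — which is precisely the feature encoded in Lemma \ref{range_increasing}.
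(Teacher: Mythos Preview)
The paper gives no proof of this lemma at all: it is stated purely as a citation of \cite{schapira20110}*{Theorem~1.1} and used as a black box. So there is nothing to compare your argument against; what you have written is an independent attempt at Schapira's theorem, not a reconstruction of anything in the present paper.

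As for the content of your sketch, two genuine gaps remain. First, in the half-line exclusion you assert that $Z_k(a+1)$ (the local time at $a+1$ at the $k$-th return to $a$) grows only linearly in $k$, and you justify this by invoking the frontier estimate \eqref{schapira_result}. But that estimate bounds $Z_{T_N}(N-1)$ at the \emph{moving} frontier $N$, not the accumulation of local time at the \emph{fixed} vertex $a+1$ over many excursions; an excursion from $a$ can revisit $a+1$ arbitrarily often before returning, and nothing you cite rules out superlinear growth of $Z_k(a+1)$. Without that control the series $\sum_k Z_k(a+1)^{-\alpha}$ need not diverge and the conditional Borel--Cantelli step collapses. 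Second, and as you yourself flag, the $0$--$1$ part is only a heuristic: the ``approximate regeneration'' at new maxima is not a tail-triviality argument, because the left environment is never forgotten and the event $\{\text{range}=\mathbb Z\}$ depends on it in a way you have not decoupled. Turning the exponential tail of \eqref{schapira_result} into a summable coupling error that forces $\mathbf 1_{\{\text{range}=\mathbb Z\}}$ to be deterministic is exactly the hard part of Schapira's proof, and you have not supplied it. In short, your outline identifies the right dichotomy and the right obstacles, but neither the half-line exclusion nor the $0$--$1$ step is actually proved.
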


\begin{proofof}{the theorem}
By Lemma \ref{Shapira0-1law}, we know that $X_n$ is either recurrent or transient. Now
suppose $X_n$ is transient, then $X_n$ will visit the origin just finitely many times almost surely. By Lemma \ref{supermartingale},
$F_i(X_{i})$ will be a supermartingale eventually. Since it is positive, it converges to a finite random variable almost surely.
On the other hand, by Lemma \ref{range_increasing} there will be infinitely many vertices $N$, such that the increment of $F_i(X_{i})$
at time $T_N$ is bounded from below by a positive random
variable. Indeed, the only update to $Z$ that happens at time $T_N$ is
the increasing of $Z(N)$ to 1, but $Z(N)$ does not appear in the sum
defining $F_{T_n-1}$. Hence
\[
F_{T_n}(X_{T_n})-F_{T_n-1}(X_{T_n-1})
=\frac{1}{w(Z_{T_N}(N-1))w(1)}
\ge\frac{1}{w(I)w(1)}>0
\]
(where $I$ is still $\liminf_{n\to \infty} Z_{T_n}(n-1)< \infty$). This contradicts the convergence of $F_i(X_{i})$. Therefore, we can
conclude the theorem.
\end{proofof}
\section{Acknowledgements}

We thank Ronen Eldan and Cyrille Lucas for many enlightening discussions. During the research, J.C.\ was a student at
the Weizmann Institute of Science, supported by the
Israel Science Foundation. G.K.\ partially supported by the Israel Science Foundation.

\bibliography{sublinear_VRRW_bib}

\end{document}